\newtheorem{lemma}{Lemma}
\newtheorem{obs}{Observation}
\newtheorem{proposition}{Proposition}
\newtheorem{theorem}{Theorem}
\newtheorem{conjecture}[lemma]{Conjecture}
\newtheorem{assumption}{Assumption}
\newcommand{\EE}{{\mathbb{E}}}
\newcommand{\PP}{\mathbb{P}}
\newcommand{\Z}{\mathbb {Z}}
\newcommand{\cH}{\mathcal {H}}
\newcommand{\cE}{\mathcal {E}}
\newcommand{\ee}{ \delta}
\newcommand{\pp}{{{p}}}
\newcommand{\ent}{{\mathrm{Ent}}}
\title{Entropy dissipation estimates for inhomogeneous zero-range processes}
\author{Jonathan Hermon, Justin Salez}
\begin{document}
\maketitle

\begin{abstract}
Introduced by Lu \& Yau (CMP, 1993), the martingale decomposition method is a powerful recursive strategy that has produced sharp log-Sobolev inequalities for homogeneous particle systems. However, the intractability of certain covariance terms has so far precluded  applications to heterogeneous models. Here we demonstrate that the existence of an appropriate coupling can be exploited to bypass this limitation effortlessly. Our main result is a dimension-free modified log-Sobolev inequality  for zero-range processes on the complete graph, under the only requirement that all rate increments  lie in a compact subset of $(0,\infty)$. This settles an open problem raised by Caputo \& Posta (PTRF, 2007) and reiterated by  Caputo, Dai Pra \& Posta (AIHP, 2009). We believe that our  approach is simple enough to be applicable to many systems.
\end{abstract}
\tableofcontents

\newpage
\section{Introduction}
\subsection{Entropy dissipation estimates}

 Consider a reversible Markov generator $Q$ with respect to some probability distribution $\pi$ on a finite state space  $\Omega$. In other words, $Q$ is a $\Omega\times\Omega$ matrix with non-negative off-diagonal entries, with each row summing up to $0$, and satisfying the \emph{local balance} equations
\begin{eqnarray}
\label{reversible}
\pi(x)Q(x,y) & = & \pi(y)Q(y,x),
\end{eqnarray}
for all $x,y\in \Omega$. When $Q$ is irreducible,  the Markov semi-group $P_t=e^{tQ}$ generated by $Q$ \emph{mixes}:  for any observable $f\colon\Omega\to(0,\infty)$, we have the pointwise convergence
\begin{eqnarray}
\label{mixing}
P_t f & \xrightarrow[t\to\infty]{} &  \EE[f(X)],
 \end{eqnarray} 
where throughout the paper, $X$ denotes a $\pi-$distributed random variable. A natural way to quantify this convergence consists in measuring the rate at which the \emph{entropy} 
\begin{eqnarray}
\ent (f) & := & \EE \left[f(X)\log f(X)\right]-\EE [f(X)]\log \EE [f(X)],
\end{eqnarray}
decays along the semi-group. Specifically, one looks for a constant $\alpha>0$, as large as possible, such that for all observables $f\colon \Omega\to(0,\infty)$ and all times $t\ge 0$,
\begin{eqnarray}
\label{entropy}
\ent \left(P_t f\right) & \le & e^{-\alpha t}\,\ent (f).
\end{eqnarray}
The optimal value of  $\alpha$ is called the \emph{entropy dissipation constant} and will be denoted by $\alpha(Q)$. Writing $\cE (f,g) := -\EE \left[f(X)(Q g)(X)\right]$ for the underlying Dirichlet form, we compute
\begin{eqnarray}
\frac{d}{dt}\,\ent \left(P_t f\right)  & = & -\cE \left(P_tf,\log P_t f\right).
\end{eqnarray}
 Thus, $\alpha(Q)$ is more effectively characterized as the largest constant $\alpha>0$ such that the following \emph{modified log-Sobolev inequality} (\textsc{MLSI}) holds: for all observables $f\colon \Omega\to(0,\infty)$,
\begin{eqnarray}
\label{MLSI}
\cE  \left(f,\log f\right) &  \ge & \alpha\,\ent \left(f\right).
\end{eqnarray}
 We refer to the tutorial paper \cite{MR2283379} or the textbook \cite{MR2341319} for more details on this fundamental functional inequality and its relation to hypercontractivity, concentration and mixing times. For intrinsic reasons exposed in \cite{MR3843561}, establishing sharp \textsc{MLSI}'s for Markov chains on finite spaces remains a notoriously challenging task. A natural and important context where such entropic estimates have received a particular attention is that of interacting particle systems and, in particular, \emph{zero-range} processes.

\subsection{Zero-range dynamics}

Introduced by Spitzer \cite{Spitzer}, the \emph{zero-range process} (\textsc{ZRP}) is a  generic  conservative particle system in which individual jumps occur at a rate which only depends on the source, the destination, and the number of particles present at the source. We shall here focus on the mean-field version of the model, which is parameterized by the following ingredients:
\begin{itemize}
\item two integers $n,m\ge 1$ representing the numbers of sites and  particles, respectively; 
\item a  probability vector $\pp=(\pp_1,\ldots,\pp_n)$ specifying the distribution of the jumps;
\item a function $r_i\colon\{1,2,\ldots\}\to(0,\infty)$ encoding the kinetics at site $i\in [n]:=\{1,\ldots,n\}$.
\end{itemize}
The \textsc{ZRP} with these parameters is a continuous-time Markov chain on the state space 
\begin{eqnarray}
\Omega & := & \left\{x\in\Z_+^n\colon \sum_{i=1}^n x_i=m\right\},
\end{eqnarray}
where $x_i$ represents the number of particles at site $i$. The action of the generator is given by 
\begin{eqnarray}
\label{def:markov}
(Q f)(x) & := & \sum_{i=1}^n\sum_{j=1}^n r_i(x_i)\pp_j\left(f(x+\ee_j-\ee_i)-f(x)\right),
\end{eqnarray}
where $(\ee_1,\ldots,\ee_n)$ denotes the canonical $n-$dimensional basis, and with the convention that $r_i(0)=0$ for all $i\in [n]$ (no jump from empty sites). This generator  is easily seen to be reversible with respect to the  explicit distribution
\begin{eqnarray}
\label{statio}
\pi(x) & \propto & \prod_{i=1}^n\frac{ \pp_i^{x_i}}{r_i(1)\cdots r_i(x_i)},
\end{eqnarray}
with the normalization being chosen so that $\pi$ is a probability distribution on $\Omega$. A somewhat degenerate but instructive situation is obtained with the linear choice
\begin{eqnarray}
\label{linear}
 r_i(\ell) & = & c\times\ell,
 \end{eqnarray} for all $i\in[n]$ and $\ell\ge 0$, where $c>0$ is an arbitrary constant. In this case, the model trivializes in the sense that the $m$ particles perform independent random walks, each jumping at rate $c$ according to $\pp$. Thanks to  the tensorization property of variance and entropy, the Poincaré and modified log-Sobolev constants of the whole system then reduce to those of a single particle, which are easily seen to be bounded away from $0$ independently of $\pp$. In light of this, it is reasonable to expect that  dimension-free functional inequalities will persist in the perturbative regime where the rate functions are \emph{nearly linear}.  Turning this intuition into rigorous estimates has been and continues to be a subject of active research, see e.g., \cite{MR1415232,MR1681098,MR2073330,MR2200172,MorrisZRP,HS} on the Poincaré side, and \cite{MR2184099,Cap,Graham,MR2548501,MR3513606} on the log-Sobolev side.

\subsection{Result and related works}
Perhaps the most natural way  to formalize the idea that the rate functions should be   \emph{nearly linear} consists in requiring that their increments all lie in a fixed  compact subset of $(0,\infty)$.
\begin{assumption}\label{assume:rates}
There are constants $\Delta,\delta>0$ such that for every $i\in [n]$ and every $\ell\in\Z_+$,
\begin{eqnarray}
\delta \  \le & r_i(\ell+1)-r_i(\ell) & \le  \ \Delta.
\end{eqnarray}
\end{assumption}
In a remarkable work \cite{MR2548501}, Caputo, Dai Pra \& Posta developed a novel method, based on the so-called Bochner-Bakry-Emery approach introduced in \cite{MR889476}, to establish a dimension-free $\textsc{MLSI}$ in the perturbative regime where the ratio $\Delta/\delta$  is sufficiently small.
\begin{theorem}[Caputo, Dai Pra \& Posta, \cite{MR2548501}]\label{th:CPP}Suppose that Assumption \ref{assume:rates} holds with
\begin{eqnarray}
\label{restrict}
\frac{\Delta}{\delta} & < & 2,
\end{eqnarray}
and take $\pp=(\frac{1}{n},\ldots,\frac{1}{n})$. Then, for any number $m$ of particles,  the \textsc{ZRP}  satisfies 
\begin{eqnarray}
\label{CPP}
\alpha(Q) & \ge &   2\delta-\Delta.
\end{eqnarray}
\end{theorem}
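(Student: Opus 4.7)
The plan is to adopt the Bakry-Emery-type strategy underlying the Bochner approach: rather than attacking the \textsc{MLSI} \eqref{MLSI} directly, I would establish the pointwise differential inequality
\begin{eqnarray}
-\frac{d}{dt}\,\cE(P_tf,\log P_tf) & \ge & (2\delta-\Delta)\,\cE(P_tf,\log P_tf), \nonumber
\end{eqnarray}
valid for every positive $f$ and every $t\ge 0$. This amounts to saying that the Dirichlet form decays exponentially along the semigroup at rate at least $2\delta-\Delta$. Integrating from $0$ to $\infty$, using the identity $\ent(f)=\int_0^\infty \cE(P_tf,\log P_tf)\dd t$ and the resulting exponential decay, would then yield \eqref{CPP}.

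The heart of the proof is therefore to compute the time derivative of $\cE(P_tf,\log P_tf)$ and rewrite it as a discrete analogue of the iterated carré du champ $\Gamma_2(f,\log f)$. After differentiation, reversibility \eqref{reversible} and the chain rule for the logarithm, one obtains a functional that decomposes, for the zero-range generator \eqref{def:markov}, into two natural pieces: a \emph{diagonal} contribution arising from pairs of consecutive transitions sharing the same source (or same destination), and a \emph{cross} contribution coming from pairs of transitions involving disjoint sites. Under Assumption~\ref{assume:rates}, the diagonal contribution can be bounded from below by $2\delta\,\cE(f,\log f)$, by exploiting the fact that the convex inequality $(a-b)(\log a-\log b)\ge 0$ underlying the Dirichlet form reproduces itself at the level of two successive jumps, with a multiplicative gain of $\delta$ per active increment. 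The cross terms, on the other hand, must be absorbed by an $O(\Delta)\,\cE(f,\log f)$ bound, which is where the upper bound on increments is crucial.

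The main obstacle, and the reason for the restrictive condition \eqref{restrict}, is precisely this control of the cross terms. Without the uniform choice $\p=(\tfrac{1}{n},\ldots,\tfrac{1}{n})$, these cross terms cannot be matched to the diagonal positive gain, because the symmetry between sites is broken; even with uniformity, the matching is exact only up to a loss of order $\Delta$ per pair of transitions, which is why the resulting estimate takes the form ``gain $2\delta$ minus loss $\Delta$'' and degenerates as $\Delta/\delta\uparrow 2$. Extending the argument beyond this perturbative regime would require either a genuinely new treatment of the cross terms or a completely different strategy---as announced in the abstract, the present paper bypasses the difficulty via a coupling-based martingale decomposition.
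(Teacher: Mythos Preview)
This theorem is not proved in the present paper: it is quoted from \cite{MR2548501} as background, and the paper only records that the argument there proceeds via the Bochner--Bakry--Emery method of \cite{MR889476}. There is therefore no in-paper proof to compare your proposal against.

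Your outline is a faithful high-level description of that method: one establishes the curvature-type differential inequality $-\frac{d}{dt}\cE(P_tf,\log P_tf)\ge (2\delta-\Delta)\,\cE(P_tf,\log P_tf)$ through a discrete $\Gamma_2$ computation, and then integrates using $\ent(f)=\int_0^\infty\cE(P_tf,\log P_tf)\dd t$ to obtain \eqref{CPP}. The integration step is sound. What your proposal does not supply is the substance of the $\Gamma_2$ estimate itself. In \cite{MR2548501} this requires writing out the explicit second-order discrete Bochner identity for the generator \eqref{def:markov}, isolating a sum of manifestly nonnegative terms with the correct prefactors coming from the rate increments, and then controlling the remaining terms by $\Delta\,\cE(f,\log f)$ via concrete convexity inequalities for the function $(u,v)\mapsto (u-v)(\log u-\log v)$. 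Your sentences ``the diagonal contribution can be bounded from below by $2\delta\,\cE(f,\log f)$'' and the cross terms ``must be absorbed by an $O(\Delta)\,\cE(f,\log f)$ bound'' state the desired conclusions without any mechanism; these are precisely where the work of \cite{MR2548501} lies, and without them what you have is a correct orientation rather than a proof.
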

In the so-called \emph{homogeneous setting} where the rate function $r_i$ is not allowed to depend on $i$, Assumption \ref{assume:rates} can be relaxed at the price of considerable efforts, see \cite{Cap}. However, the martingale approach used therein does not extend to inhomogeneous models in a natural way and, to the best of our knowledge, Theorem \ref{th:CPP} constitutes the only available criterium  for inhomogeneous rates.
Although the bound (\ref{CPP}) trivializes as $\Delta$ approaches the threshold $2\delta$, the authors predicted the persistence of a dimension-free $\textsc{MLSI}$ beyond the perturbative regime (\ref{restrict}).  Specifically, they formulated the following conjecture,  reiterated in \cite{Cap}. 
\begin{conjecture}[Caputo, Dai Pra \& Posta, \cite{MR2548501}]\label{conj}
Under Assumption \ref{assume:rates}, there is a dimension-free constant $c(\delta,\Delta)>0$ such that for $\pp=(\frac{1}{n},\ldots,\frac{1}{n})$ and any number $m$ of particles, 
\begin{eqnarray}
\alpha(Q) & \ge &   c(\delta,\Delta).
\end{eqnarray}
\end{conjecture}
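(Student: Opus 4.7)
The plan is to adapt the Lu--Yau martingale decomposition to the inhomogeneous setting by using a coupling argument to bypass the covariance obstruction that has historically blocked this strategy. I would induct on the number of sites $n$, letting $\alpha_n = \alpha_n(\delta,\Delta)$ denote the infimum of the entropy dissipation constants over all \textsc{ZRP}s on $n$ sites (with any number of particles, uniform jumps, and rates satisfying Assumption~\ref{assume:rates}); the goal is to show $\inf_{n\geq 1} \alpha_n > 0$.

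Fix an observable $f$ and pivot on the last coordinate, yielding the standard conditional split
\[
\ent(f) \;=\; \EE\bigl[\ent(f \mid x_n)\bigr] \,+\, \ent(\psi), \qquad \psi(k) \,:=\, \EE[f \mid x_n = k].
\]
The conditional law of $(x_1,\dots,x_{n-1})$ given $\{x_n = k\}$ is itself a stationary \textsc{ZRP} distribution with $m-k$ particles on $n-1$ sites and the same constants $\delta, \Delta$, so the induction hypothesis, combined with the identity expressing the conditional Dirichlet form in terms of the full one, reduces the first term to a contribution of the right form. The real difficulty is the marginal piece $\ent(\psi)$: the effective birth--death rates of the $x_n$-marginal involve weighted partition functions of the complement which, in the inhomogeneous regime, cannot be reduced to any explicit reference chain.

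The new ingredient is the coupling. For each $k$, I would construct a monotone coupling between the two stationary laws $\pi(\,\cdot\, \mid x_n = k)$ and $\pi(\,\cdot\, \mid x_n = k+1)$, viewed as measures on $(n-1)$-site configurations differing by one particle. The uniform control on rate increments provided by Assumption~\ref{assume:rates} should guarantee that, under a suitable basic coupling of the corresponding dynamics, the location of the ``extra'' particle behaves as an ergodic walk whose relaxation time is $O(1)$, independently of $n$ and $m$. This coupling estimate translates into an $n$-independent bound on the fluctuations of the log-ratios $\log(\psi(k+1)/\psi(k))$, which is exactly what is needed to dominate $\ent(\psi)$ by the full Dirichlet form $\cE(f, \log f)$ without encountering the intractable covariance term that had obstructed the martingale approach.

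Collecting both estimates yields a recursion of the form $\alpha_n \geq \alpha_{n-1}/(1 + C(\delta,\Delta)/n)$, which upon iteration produces a dimension-free lower bound $c(\delta,\Delta) > 0$, as conjectured. The principal obstacle will be the coupling step: one needs not merely a total-variation contraction but an entropic-strength estimate on the discrepancy of the two conditional measures, and it is delicate to arrange this under the sole hypothesis of bounded rate increments, without any monotonicity or regularity of the individual $r_i$'s.
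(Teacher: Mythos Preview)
Your high-level strategy is that of the paper: Lu--Yau martingale decomposition combined with a monotone coupling between the conditional laws $\pi(\cdot\mid X_i=\ell)$ and $\pi(\cdot\mid X_i=\ell-1)$ to control the marginal entropy $\ent(\psi)$. That is indeed the new idea, and you have identified it correctly. Two steps in your execution would fail as written, however.

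First, the recursion $\alpha_n \geq \alpha_{n-1}/(1+C/n)$ does \emph{not} iterate to a dimension-free bound: $\prod_{k=2}^{n}(1+C/k)$ grows like $n^{C}$, so you would only obtain $\alpha_n \gtrsim n^{-C}$. The paper avoids any loss in the recursion by pivoting on \emph{every} site $i\in[n]$, multiplying each resulting inequality by $(1-\pp_i)$, and summing. This symmetrization cancels the $1/(1-\pp_i)$ factor coming from the rescaled conditional jump vector and yields directly $(n-1)\,\ent(f)\le \frac{2\Delta}{\delta^2}(n-1)\,\cE(f,\log f)$, i.e.\ the same constant $\delta^{2}/(2\Delta)$ at every stage of the induction, with no accumulated loss.

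Second, the mechanism by which the coupling is used differs from what you sketch. The relevant estimate is not a relaxation-time bound for the extra particle, nor a bound on the log-ratios $\log(\psi(k+1)/\psi(k))$. What is actually needed---and proved via a short maximum-principle argument on the global balance equation of the tagged process---is a \emph{pointwise density bound} on the stationary law of the tagged particle: $\widehat\pi(x,j)\le (\Delta/\delta)\,\pi(x)\pp_j$, hence $\PP(J=j\mid X=x)\le \Delta \pp_j/(\delta(1-\pp_i))$. The marginal term is then handled by Jensen's inequality for the convex function $\cH(u,v)=(u-v)(\log u-\log v)$: the coupling gives $\psi(\ell-1)=\EE[f(X-\delta_i+\delta_J)\mid X_i=\ell]$, so $\cH(\psi(\ell),\psi(\ell-1))\le \EE[\cH(f(X),f(X-\delta_i+\delta_J))\mid X_i=\ell]$, and the density bound converts the right-hand side into a genuine piece of $\cE(f,\log f)$. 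This reduces $\ent(\psi)$ to a one-dimensional birth--death \textsc{MLSI}, which follows from Assumption~\ref{assume:rates}. Your ``fluctuations of log-ratios'' heuristic does not supply this chain of inequalities.
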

Proving this requires new ideas, since it was  noted in \cite{MR2548501} that the  convexity of $t\mapsto \ent(P_tf)$ fails as $\frac{\Delta}{\delta}$ gets large, making the Bochner-Bakry-Emery approach unapplicable. In the present paper, we establish the following strengthening of Conjecture \ref{conj}. 
\begin{theorem}[Dimension-free \textsc{MLSI}]
\label{th:ZRP}
Under the sole Assumption \ref{assume:rates}, and for any choice of the parameters $\pp$ and $m$,  the modified log-Sobolev constant of the \textsc{ZRP}  satisfies 
\begin{eqnarray}
\alpha(Q) & \ge &   \frac{\delta^2}{2\Delta}.
\end{eqnarray}
\end{theorem}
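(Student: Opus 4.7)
The plan is to prove the bound by induction on the number of sites $n$ via the Lu--Yau martingale decomposition, using a coupling to overcome the covariance obstruction that blocks a direct inhomogeneous extension. Write $\alpha_{n,m}$ for the MLSI constant of the $(n,m)$-ZRP, and assume inductively that $\alpha_{n-1,\ell} \ge \delta^2/(2\Delta)$ for every $\ell \le m$; the goal is to propagate the same bound to $\alpha_{n,m}$.

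Singling out site $n$ and writing $\bar f(k) := \EE[f \mid x_n = k]$, the standard tensorization identity
\begin{equation*}
\ent(f) \;=\; \EE\bigl[\ent(f \mid x_n)\bigr] + \ent(\bar f)
\end{equation*}
splits the problem into two pieces. Conditionally on $x_n = k$, the configuration $(x_1,\dots,x_{n-1})$ is distributed as the stationary measure of an $(n-1, m-k)$-ZRP satisfying the same Assumption \ref{assume:rates}, so the induction hypothesis dominates $\EE[\ent(f\mid x_n)]$ by $(2\Delta/\delta^2)\cE^\circ(f,\log f)$, where $\cE^\circ$ collects the Dirichlet contributions from jumps not involving site $n$. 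As for the marginal, $x_n$ obeys a birth-and-death chain with death rate $(1-\pp_n)r_n(k)$ and birth rate $\pp_n\EE\bigl[\sum_{i \ne n} r_i(x_i) \mid x_n = k\bigr]$; Assumption \ref{assume:rates} forces both rate increments into $[\delta,\Delta]$, so a dimension-free one-dimensional MLSI yields $\ent(\bar f) \le (2\Delta/\delta^2)\bar\cE(\bar f,\log\bar f)$ for the Dirichlet form $\bar\cE$ of this effective chain.

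The theorem thus reduces to the single inequality $\bar\cE(\bar f,\log\bar f) \le \cE^\bullet(f,\log f)$ with $\cE^\bullet := \cE - \cE^\circ$ the part of the Dirichlet form involving site $n$. This is precisely the covariance bottleneck, since $\log\bar f(k+1) - \log\bar f(k)$ is a log-ratio of conditional averages, whereas $\cE^\bullet$ involves the pointwise log-ratios $\log f(x+\ee_n-\ee_i) - \log f(x)$. To cross this gap---the main obstacle of the proof---I would construct an explicit coupling $(X,Y)$ of $\pi(\cdot\mid x_n=k)$ and $\pi(\cdot\mid x_n=k+1)$ in which $Y$ is obtained from $X$ by moving a single particle from a random site $I\in[n-1]$ to site $n$. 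Thanks to the product formula \eqref{statio} together with Assumption \ref{assume:rates}, the Radon--Nikodym derivative of the law of $Y$ against $\pi(\cdot\mid x_n=k+1)$ admits pointwise bounds depending only on $\delta$ and $\Delta$, uniformly in $k$. A joint convexity application for $(a,b)\mapsto(a-b)(\log a - \log b)$ then converts $\bar\cE(\bar f,\log\bar f)$ into $\cE^\bullet(f,\log f)$ without spoiling the target constant. Exhibiting this coupling and verifying the uniform Radon--Nikodym bound in the required sharp form is the technical crux; everything else is bookkeeping around the induction.
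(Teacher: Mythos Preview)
Your strategy is exactly the paper's: induction on $n$ via the martingale decomposition, with the covariance bottleneck handled by a one-particle coupling between the laws $\pi(\cdot\mid x_n=k)$ and $\pi(\cdot\mid x_n=k+1)$, followed by Jensen for the jointly convex function $(a,b)\mapsto(a-b)(\log a-\log b)$, and a one-dimensional birth--death \textsc{MLSI} for the marginal of $x_n$. The paper realises the coupling through the stationary law of a \emph{tagged} particle on top of an $(n-1)$-site ZRP, and shows that conditionally on the background, the tag sits at $j$ with probability at most $\frac{\Delta}{\delta}\cdot\frac{p_j}{1-p_n}$ (Lemma~\ref{lm:product}, Proposition~\ref{pr:coupling}).

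Where your sketch does not close is precisely in the ``bookkeeping'' you wave away. Two constants are mis-tracked. First, the conditional $(n-1)$-system has probability vector $\bigl(\frac{p_j}{1-p_n}\bigr)_{j\ne n}$, so the induction hypothesis yields $\EE[\ent(f\mid x_n)]\le \frac{2\Delta}{\delta^2}\cdot\frac{1}{1-p_n}\,\cE^\circ(f,\log f)$, not $\frac{2\Delta}{\delta^2}\cE^\circ$; with a single distinguished site this factor $\frac{1}{1-p_n}$ never disappears, and the induction does not reproduce $\delta^2/(2\Delta)$. The paper's fix is to run the decomposition at \emph{every} site $i$, multiply each resulting inequality by $1-p_i$, and sum over $i$; the factors $(n-1)$ on both sides then cancel. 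Second, the coupling cannot deliver $\bar\cE(\bar f,\log\bar f)\le\cE^\bullet(f,\log f)$ with constant~$1$: the Radon--Nikodym control you invoke is exactly the tagged-particle bound above, and it costs a factor $\Delta/\delta$, giving only $\bar\cE\le\frac{\Delta}{\delta}\cE^\bullet$. The paper absorbs this loss by using the sharp one-dimensional \textsc{MLSI} $\alpha(K)\ge\delta$ (obtained because the death-rate increments are $\ge(1-p_n)\delta$ and the birth-rate decrements---again via the coupling---are $\ge p_n\delta$), whereas your weaker one-dimensional input $\alpha(K)\ge\delta^2/(2\Delta)$ leaves an uncompensated $\Delta/\delta$. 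Both repairs are short once seen, but neither is optional.
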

This estimate is sharp up to a factor $2$, as can already been seen in the  linear case (\ref{linear}) (see, \cite[Example 3.10]{MR2283379}). More importantly, our main contribution lies in the simplicity  of the method used to prove Theorem \ref{th:ZRP}. Our starting point is the following elementary observation, which is a straightforward consequence of the  product form (\ref{statio}). 
\begin{obs}[Recursive structure]\label{obs:rec} If $X=(X_1,\ldots,X_n)$ has law $\pi$, then for any site $i\in[n]$, the conditional law of $(X_1,\ldots,X_{i-1},X_{i+1},\ldots,X_n)$ given $X_i$ coincides with the stationary law of a new \textsc{ZRP} with  $n-1$ sites, $m-X_i$ particles, rates $(r_1,\ldots,r_{i-1},r_{i+1},\ldots,r_n)$ and probability vector 
$
\left(\frac{\pp_1}{1-\pp_i},\ldots,\frac{\pp_{i-1}}{1-\pp_i},\frac{\pp_{i+1}}{1-\pp_i},\ldots,\frac{\pp_n}{1-\pp_i}\right).
$
\end{obs}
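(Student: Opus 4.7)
The plan is to verify the observation by directly computing the conditional law from the explicit product formula \eqref{statio}. Fix a site $i\in[n]$ and an integer $k\in\{0,1,\ldots,m\}$, and condition on the event $\{X_i=k\}$. For every configuration $x\in\Omega$ with $x_i=k$, formula \eqref{statio} gives
\[
\pi(x) \ \propto \ \frac{\pp_i^{k}}{r_i(1)\cdots r_i(k)}\,\prod_{j\ne i}\frac{\pp_j^{x_j}}{r_j(1)\cdots r_j(x_j)}.
\]
The first factor depends only on $k$ (not on the rest of $x$), so under conditioning it will be absorbed into the normalization constant $1/\PP(X_i=k)$.

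Next I would handle the rescaling of $\pp$. Writing $\pp_j = (1-\pp_i)\cdot\frac{\pp_j}{1-\pp_i}$ for each $j\ne i$ and exploiting the conservation law $\sum_{j\ne i}x_j = m-k$, I obtain
\[
\prod_{j\ne i}\pp_j^{x_j} \ = \ (1-\pp_i)^{m-k}\prod_{j\ne i}\left(\frac{\pp_j}{1-\pp_i}\right)^{x_j}.
\]
Again, the prefactor $(1-\pp_i)^{m-k}$ is constant on the event $\{X_i=k\}$, hence irrelevant once we renormalize. What remains is
\[
\PP\!\left(X_j=x_j,\,j\ne i\,\big|\,X_i=k\right) \ \propto \ \prod_{j\ne i}\frac{\left(\pp_j/(1-\pp_i)\right)^{x_j}}{r_j(1)\cdots r_j(x_j)},
\]
supported on $\{(x_j)_{j\ne i}\in\Z_+^{n-1}\colon\sum_{j\ne i}x_j = m-k\}$. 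Comparing this with \eqref{statio} applied to the parameters listed in the observation, the two distributions coincide.

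There is no genuine obstacle here: the argument is a routine check that the product measure \eqref{statio} factorizes nicely under conditioning, with the two cancellations (the $i$-th factor, and the power of $1-\pp_i$) being enabled respectively by the fact that we condition on $X_i=k$ and by the conservation of particles. The only minor subtlety to state explicitly is that the residual distribution is automatically normalized to a probability measure on the correct simplex, because its functional form matches \eqref{statio} for the reduced ZRP.
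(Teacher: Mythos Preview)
Your proposal is correct and is precisely the verification the paper has in mind: the authors do not spell out a proof but simply describe the observation as ``a straightforward consequence of the product form \eqref{statio},'' and your computation is exactly that straightforward check.
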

Since the new \textsc{ZRP}  inherits Assumption  \ref{assume:rates}  from the original one, a natural approach to Theorem \ref{th:ZRP} consists in proceeding by induction over the dimension $n$. This is in fact a classical  strategy for establishing  functional inequalities, known as the \emph{martingale decomposition method}. Introduced  by Lu \& Yau \cite{MR1233852} in the context of Kawasaki and Glauber dynamics, it has been successfully applied to various  interacting particle systems \cite{MR1483598,MR1675008,MR2023890,Cap}, as well as other Markov chains enjoying an appropriate recursive structure \cite{MR1944012,1181997,MR2099650,2019arXiv190202775H}. To convert the above observation into an effective functional inequality however, one needs to estimate the Dirichlet form of the $n-$dimensional system in terms of that of the $(n-1)-$dimensional system. Because of the interaction between particles, this decomposition inevitably produces certain \emph{cross terms}, whose intractibility has so far precluded  applications to heterogeneous models. Our main contribution consists in showing that the existence of an appropriate coupling -- see Proposition \ref{pr:coupling} below -- allows one to bypass this limitation effortlessly. We firmly believe that our idea is simple enough to be applicable to many other settings.

\section{Proofs}

From now onwards, we suppose that Assumption \ref{assume:rates} is satisfied.

\subsection{Monotone coupling}
Using the fact that each rate function $r_i\colon\{1,2,\ldots\}\to(0,\infty)$ is non-decreasing, we may  {enrich} our \textsc{ZRP}  by adding a \emph{tagged particle} on top of it, whose position $(J(t)\colon t\ge 0)$ evolves as follows: conditionally on the background process $(X(t)\colon t\ge 0)$ being currently in some state $x\in\Omega$, we let the tagged particle jump across $[n]$ according to the Markov generator 
\begin{eqnarray}
\label{tagged}
L_x(i,j) & := & \left(r_{i}(x_i+1)-r_i(x_i)\right)\left(\pp_j-\bf 1_{(i=j)}\right).
\end{eqnarray}
In more formal terms, we consider a  Markov process $\left((X(t),J(t))\colon t\ge 0\right)$ taking values in the product space $\widehat{\Omega}:=\Omega\times [n]$, and evolving   according to the generator 
\begin{eqnarray}
\widehat{Q}\left((x,i),(y,j)\right) & := & Q(x,y){\bf 1}_{(i=j)} + L_x(i,j){\bf 1}_{(x=y)}.
\end{eqnarray}
We refer to this process as the \emph{tagged} \textsc{ZRP}. 
An elementary but crucial observation is that 
\begin{enumerate}[(i)]
\item the first-coordinate $(X(t)\colon t\ge 0)$ is  a \textsc{ZRP} with parameters $(n,m,\pp, r)$;
\item the aggregated process $\left(X(t)+\ee_{J(t)}\colon t\ge 0\right)$ is   a \textsc{ZRP} with parameters $(n,m+1,\pp,r)$. 
\end{enumerate}
The existence of such a  monotone coupling  between zero-range processes with different numbers of particles  is of course well known, and has been extensively used in the past. However, its consequences on the martingale approach do not seem to have been explored, and this is where our conceptual contribution lies. Specifically, our interest will here reside in the invariant law $\widehat{\pi}$  of $\widehat{Q}$, which is uniquely determined by the \emph{global balance} equations
\begin{eqnarray}
\label{balance}
\sum_{x\in\Omega}\widehat{\pi}(x,j)Q(x,y)+\sum_{i=1}^n\widehat{\pi}(y,i)L_y(i,j) & = & 0,
\end{eqnarray}
for all $(y,j)\in\widehat{\Omega}$. Although we do not have any explicit expression for $\widehat{\pi}$, we note that its first marginal has to be invariant under $Q$ by construction, and is therefore simply the law $\pi$ defined at (\ref{statio}). We also note that, in the linear case (\ref{linear}),  the generator $L_x$ of the tagged particle becomes independent of the background state $x\in\Omega$, resulting in the product form
\begin{eqnarray}
\widehat{\pi}(x,j) & = & \pi(x)\pp_j.
\end{eqnarray}
The next lemma states that under Assumption  \ref{assume:rates}, $\widehat{\pi}$ is  not far from this product measure. 
\begin{lemma}[Product-measure approximation]\label{lm:product}The invariant law  of the tagged $\textsc{ZRP}$   satisfies 
\begin{eqnarray}
\label{coupling:claim}
\max_{(x,j)\in\widehat{\Omega}}\left\{\frac{\widehat{\pi}(x,j)}{\pi(x)\pp_j}\right\} & \le & \frac{\Delta}{\delta}.
\end{eqnarray}

\end{lemma}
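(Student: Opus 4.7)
The plan is to apply a one-point maximum principle to the Radon-Nikodym derivative
\begin{equation*}
h(x,j) \;:=\; \frac{\widehat\pi(x,j)}{\pi(x)\,\pp_j}
\end{equation*}
of $\widehat\pi$ with respect to the product measure $\pi\otimes\pp$; this is well-defined on $\{\pi\otimes\pp>0\}$ because the first marginal of $\widehat\pi$ is $\pi$, while coordinates with $\pp_j=0$ are never visited by the tagged particle and so may be ignored. Summing the identity $\widehat\pi(x,j)=h(x,j)\pi(x)\pp_j$ over $j$ yields the crucial normalization
\begin{equation*}
\sum_{j=1}^n h(x,j)\,\pp_j \;=\; 1 \qquad\text{for every }x.
\end{equation*}

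Next, let $h^{*}=h(x^{*},j^{*})$ denote the maximum of $h$, and evaluate the global balance equation (\ref{balance}) at $(x^{*},j^{*})$. After substituting the definition of $h$ into the background term, invoking the reversibility relation (\ref{reversible}) to swap $\pi(x)Q(x,x^{*})$ for $\pi(x^{*})Q(x^{*},x)$, and dividing through by $\pi(x^{*})>0$, the balance reduces to
\begin{equation*}
\pp_{j^{*}}\sum_{x\in\Omega}h(x,j^{*})\,Q(x^{*},x) \;+\; \sum_{i=1}^{n}h(x^{*},i)\,\pp_i\,L_{x^{*}}(i,j^{*}) \;=\; 0.
\end{equation*}
Since $\sum_{x}Q(x^{*},x)=0$, the background sum can be rewritten as $\pp_{j^{*}}\sum_{x\neq x^{*}}\bigl(h(x,j^{*})-h^{*}\bigr)Q(x^{*},x)$, which is non-positive because the off-diagonal entries of $Q$ are non-negative while $h(\cdot,j^{*})\le h^{*}$. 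Consequently the tagged contribution must be non-negative.

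Finally, expanding the tagged sum using definition (\ref{tagged}) and the identity $h(x^{*},j^{*})=h^{*}$, and then dividing by $\pp_{j^{*}}>0$, the non-negativity above is equivalent to
\begin{equation*}
h^{*}\bigl(r_{j^{*}}(x^{*}_{j^{*}}+1)-r_{j^{*}}(x^{*}_{j^{*}})\bigr) \;\le\; \sum_{i=1}^n h(x^{*},i)\,\pp_i\,\bigl(r_i(x^{*}_i+1)-r_i(x^{*}_i)\bigr).
\end{equation*}
By Assumption \ref{assume:rates} the left-hand increment is at least $\delta$, each right-hand increment is at most $\Delta$, and the normalization identity forces $\sum_i h(x^{*},i)\pp_i=1$. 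Thus $h^{*}\delta\le\Delta$, establishing the claimed bound $h^{*}\le\Delta/\delta$. I do not anticipate any serious obstacle: once reversibility is used to transform the background portion of the balance equation, the maximum principle kills that portion outright, and everything else is straightforward algebra with the tagged generator $L_{x^{*}}$.
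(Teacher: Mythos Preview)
Your proof is correct and follows essentially the same maximum-principle strategy as the paper: evaluate the balance equation at a maximizer of the density, show the background contribution is nonpositive there, and extract the bound from the nonnegativity of the tagged contribution together with Assumption~\ref{assume:rates} and the marginal identity $\sum_j\widehat\pi(x,j)=\pi(x)$. The only cosmetic differences are that the paper fixes $j$ and maximizes $\widehat\pi(\cdot,j)/\pi(\cdot)$ over $x$ alone (invoking the stationarity relation $\pi Q=0$ rather than reversibility), whereas you maximize $h$ jointly over $(x,j)$.
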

\begin{proof}Fix $j\in [n]$ and choose $y\in\Omega$ such that
\begin{eqnarray}
\label{maximal}
\frac{\widehat{\pi}(y,j)}{\pi(y)} & = & \max_{x\in\Omega}\left\{\frac{\widehat{\pi}(x,j)}{\pi(x)} \right\}.
\end{eqnarray}
At the point $(y,j)$, the first sum in the global balance equation $(\ref{balance})$ is non-positive. Indeed,
\begin{eqnarray}
\sum_{x\in\Omega}\widehat{\pi}(x,j)Q(x,y) & \le & \frac{\widehat{\pi}(y,j)}{\pi(y)}\sum_{x\in \Omega}\pi(x)Q(x,y) \ = \ 0,
\end{eqnarray}
where the equality is simply the balance equation $\pi Q=0$ at state $y$. Consequently, the second sum in (\ref{balance}) must be non-negative. In other words, 
\begin{eqnarray*}
0 & \le & \sum_{i=1}^n\widehat{\pi}(y,i)L_y(i,j) \\
& \le & \sum_{i=1}^n\widehat{\pi}(y,i)\left(\Delta\pp_j -\delta {\bf 1}_{(i=j)}\right) \ = \ \Delta\pp_j \pi(y)-\delta\widehat{\pi}(y,j),
\end{eqnarray*}
where we have used   Assumption (\ref{assume:rates}), and then the  fact that the first marginal of $\widehat{\pi}$ is $\pi$.  \end{proof}
These considerations lead us to the following result, which will constitute our main tool.
\begin{proposition}[Main coupling]\label{pr:coupling}Given a site $i\in[n]$, we may jointly construct a $\pi-$distributed random variable $X$ and a $[n]\setminus\{i\}-$valued random variable $J$ in such a way that 
\begin{eqnarray}
\label{representation}
\mathrm{Law}\left(X-\delta_i+\delta_J|X_i=\ell\right) & = & \mathrm{Law}\left(X|X_i=\ell-1\right),
\end{eqnarray}
for each level $\ell\in\{1,\ldots,m\}$ and moreover, for every $(x,j)\in \Omega\times ([n]\setminus\{i\})$,
\begin{eqnarray}
\label{product}
\PP\left(J=j|X=x\right) & \le &  \frac{\Delta\pp_j}{\delta(1-\pp_i)}.
\end{eqnarray} 
\end{proposition}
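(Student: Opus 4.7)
The plan is to apply Lemma \ref{lm:product} not to the original \textsc{ZRP} but to the \emph{smaller} \textsc{ZRP} obtained by removing site $i$. By Observation \ref{obs:rec}, this smaller system is precisely the one that governs $(X_k)_{k\ne i}$ conditionally on $X_i$, and it inherits Assumption \ref{assume:rates} with the same constants $\delta,\Delta$. This shift of perspective automatically delivers a $J$ taking values in $[n]\setminus\{i\}$, as required.

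More concretely, for each $\ell\in\{0,1,\ldots,m\}$ I would let $\nu_\ell$ be the stationary distribution of the \textsc{ZRP} on the $n-1$ sites $[n]\setminus\{i\}$ with $m-\ell$ particles, rates $(r_k)_{k\ne i}$, and probability vector $\bigl(\frac{\pp_k}{1-\pp_i}\bigr)_{k\ne i}$; by Observation \ref{obs:rec}, $\nu_\ell$ is exactly the conditional law of $(X_k)_{k\ne i}$ given $X_i=\ell$. For each $\ell\ge 1$, I would then build the tagged \textsc{ZRP} over this reduced system and denote its invariant law by $\widehat{\nu}_\ell$. The two properties (i) and (ii) listed after \eqref{tagged}, applied in this reduced setting, guarantee that if $(Y,J)\sim\widehat{\nu}_\ell$, then $Y\sim\nu_\ell$ while the aggregated configuration $Y+\delta_J$ has law $\nu_{\ell-1}$.

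To assemble the global coupling, I would first draw $L$ from the marginal law of $X_i$ under $\pi$ and set $X_i=L$; conditionally on $\{L=\ell\ge 1\}$, I would draw $(Y,J)\sim\widehat{\nu}_\ell$ and set $X_k=Y_k$ for $k\ne i$, while on the residual event $\{L=0\}$ I would sample $J$ independently with the distribution $\pp_j/(1-\pp_i)$. Then $X\sim\pi$ by construction, and for each $\ell\in\{1,\ldots,m\}$ the vector $X-\delta_i+\delta_J$ has $i$-th coordinate $\ell-1$ (since $J\ne i$) and non-$i$ coordinates $Y+\delta_J\sim\nu_{\ell-1}$. Combining with Observation \ref{obs:rec} at level $\ell-1$ identifies this joint law with $\mathrm{Law}(X\mid X_i=\ell-1)$, which establishes \eqref{representation}.

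It remains to verify \eqref{product}. For $x$ with $x_i=\ell\ge 1$ and $y=(x_k)_{k\ne i}$, the construction yields the identity
\[
\PP(J=j\mid X=x)\;=\;\frac{\widehat{\nu}_\ell(y,j)}{\nu_\ell(y)},
\]
and Lemma \ref{lm:product} applied to the reduced \textsc{ZRP} (whose jump vector is $\bigl(\frac{\pp_k}{1-\pp_i}\bigr)_{k\ne i}$) bounds the right-hand side by $\frac{\Delta}{\delta}\cdot\frac{\pp_j}{1-\pp_i}$, which is precisely \eqref{product}; the case $x_i=0$ is handled by the explicit choice above together with $\delta\le\Delta$. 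The only real conceptual move is to apply Lemma \ref{lm:product} at the level of the restricted $(n-1)$-site system rather than the ambient $n$-site one; once this is recognized, nothing deeper than routine bookkeeping remains.
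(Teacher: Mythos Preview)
Your proposal is correct and follows essentially the same approach as the paper: draw $X_i$ from its marginal, then conditionally on $X_i=\ell$ sample the remaining coordinates together with $J$ from the stationary law of the tagged \textsc{ZRP} on the reduced $(n-1)$-site system, so that Observation~\ref{obs:rec} yields $X\sim\pi$, property~(ii) of the tagged process gives~\eqref{representation}, and Lemma~\ref{lm:product} applied to the reduced system gives~\eqref{product}. Your treatment of the boundary case $X_i=0$ (sampling $J$ independently from $\pp_j/(1-\pp_i)$) is a harmless variant---the paper simply uses the tagged \textsc{ZRP} uniformly for all $\ell$---and your only slight omission is that on $\{L=0\}$ you should also specify $(X_k)_{k\ne i}\sim\nu_0$, which is clearly intended.
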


\begin{proof}We simply let $X_i$ be distributed according to the $i-$th marginal of $\pi$ and, given that $X_i=\ell$, we let $\left((X_1,\ldots,X_{i-1},X_{i+1},\ldots,X_n),J\right)$ be distributed according to the stationary distribution of the $(n-1)-$dimensional tagged $\textsc{ZRP}$ with   $m-\ell$ particles, rates $(r_1,\ldots,r_{i-1},r_{i+1},\ldots,r_n)$ and probability vector 
$
\left(\frac{\pp_1}{1-\pp_i},\ldots,\frac{\pp_{i-1}}{1-\pp_i},\frac{\pp_{i+1}}{1-\pp_i},\ldots,\frac{\pp_n}{1-\pp_i}\right).
$
Observation \ref{obs:rec} ensures that the random variable $X:=(X_1,\ldots,X_n)$ is  distributed according to $\pi$.  Moreover, property $(\ref{product})$ is guaranteed by Lemma \ref{lm:product}. Finally,  the identity (\ref{representation}) is clear since under both distributions, the $i-$th coordinate is almost-surely equal to $\ell-1$ while the joint law of the remaining coordinates is the same, thanks to observation (ii) above. 
\end{proof}

\subsection{Single-site estimate}

In this section, we show how the above coupling  implies -- without effort -- the crucial local  \textsc{MLSI} needed for our inductive proof of Theorem \ref{th:ZRP}. We start by introducing some notation. Let us decompose the Dirichlet form of the process as 
\begin{eqnarray}
\label{dec:E}
\cE(f,g) & = & \frac{1}{2}\sum_{i=1}^n\sum_{j=1}^n\cE_{ij}(f,g),
\end{eqnarray}
where $\cE_{ij}(f,g)$ captures the contribution from all jumps with source $i$ and destination $j$, i.e.
\begin{eqnarray}
\cE_{ij}(f,g) & := & \pp_j\EE\left[r_i(X_i)\left(f(X-\delta_i+\delta_j)-f(X)\right)\left(g(X-\delta_i+\delta_j)-g(X)\right)\right].
\end{eqnarray}
Given an observable $f\colon \Omega\to (0,\infty)$ and a site $i\in[n]$, we define $f_i\colon\{0,\ldots,m\}\to(0,\infty)$ by
\begin{eqnarray}
f_i(\ell) & := & \EE\left[f(X)|X_i=\ell\right].
\end{eqnarray}
With this notation in hands, our aim is to establish the following local estimate, which relates the Dirichlet contribution from a single site $i$ to the conditional entropy   given $X_i$.  
\begin{proposition}[Local \textsc{MLSI}]\label{pr:main}For any observable $f\colon\Omega\to(0,\infty)$ and any site $i\in[n]$,  
\begin{eqnarray*}
 \EE\left[f_i(X_i)\log f_i(X_i)\right]-\EE[f(X)]\log\EE[f(X)]   & \le & \frac{\Delta}{\delta^2}\sum_{j\in[n]\setminus\{i\}}\cE_{ij}(f,\log f).
\end{eqnarray*}
\end{proposition}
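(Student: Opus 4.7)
The plan is to reduce the entropy $\ent_{\pi_i}(f_i)$ to a one-dimensional problem on $\{0,\ldots,m\}$ and then use Proposition~\ref{pr:coupling} to pass from the resulting one-dimensional ``gradient'' back into the full Dirichlet form, exploiting the joint convexity of $h(a,b):=(a-b)(\log a-\log b)$ on $(0,\infty)^2$.

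First, I would establish a one-dimensional modified log-Sobolev inequality for $\pi_i$. The natural chain to consider is the birth-death process on $\{0,\ldots,m\}$ reversible with respect to $\pi_i$ whose death rate is $r_i(\ell)(1-\pp_i)$---the actual rate at which $X_i$ decreases under the ZRP. Under Assumption~\ref{assume:rates}, the increments of $r_i$ are at least $\delta$, and the induced birth rate is non-increasing in $\ell$ (since larger $X_i$ leaves fewer particles elsewhere). The bound I am after is
\[
\ent_{\pi_i}(f_i)\ \le\ \frac{1}{\delta}\sum_{\ell=1}^m \pi_i(\ell)\,r_i(\ell)(1-\pp_i)\,h\bigl(f_i(\ell),f_i(\ell-1)\bigr),
\]
amounting to an MLSI of constant $\delta$ for this chain; in the linear case $r_i(\ell)=\delta\ell$ it follows directly from tensorization over Bernoulli factors.

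Second, I would invoke the coupling $(X,J)$ of Proposition~\ref{pr:coupling}. The identity~(\ref{representation}) gives $f_i(\ell)=\EE[f(X)\mid X_i=\ell]$ and $f_i(\ell-1)=\EE[f(X-\delta_i+\delta_J)\mid X_i=\ell]$, so joint convexity of $h$ applied to the conditional law yields $h(f_i(\ell),f_i(\ell-1))\le \EE[h(f(X),f(X-\delta_i+\delta_J))\mid X_i=\ell]$. Plugging this into Step~1, telescoping the conditional expectation, and then decomposing by the value of $J$ while applying the pointwise bound~(\ref{product}), $\PP(J=j\mid X)\le \Delta\pp_j/(\delta(1-\pp_i))$, produces the chain
\[
\ent_{\pi_i}(f_i)\ \le\ \frac{1-\pp_i}{\delta}\,\EE\bigl[r_i(X_i)\,h(f(X),f(X-\delta_i+\delta_J))\bigr]\ \le\ \frac{1-\pp_i}{\delta}\cdot\frac{\Delta}{\delta(1-\pp_i)}\sum_{j\ne i}\cE_{ij}(f,\log f),
\]
in which the two factors of $1-\pp_i$ cancel exactly, leaving the announced $\frac{\Delta}{\delta^2}\sum_{j\ne i}\cE_{ij}(f,\log f)$.

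The hard part will be the first step: it demands the constant $\delta$, whereas a direct application of the standard Caputo--Dai Pra--Posta 1D criterion to the slowed chain only yields the weaker $\delta(1-\pp_i)$. Closing that gap seems to require exploiting the specific structure of $\pi_i$ inherited from the product form~(\ref{statio}) of $\pi$ rather than treating the marginal chain as a black-box birth-death chain---and it is precisely this sharper constant that lets the $(1-\pp_i)$ factor from the coupling bound cancel cleanly in the final estimate.
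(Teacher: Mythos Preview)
Your two-step plan is exactly the paper's: first a one-dimensional \textsc{MLSI} with constant $\delta$ for the marginal chain of $X_i$, then the coupling of Proposition~\ref{pr:coupling} together with the joint convexity of $\cH$ to lift back to $\sum_{j\ne i}\cE_{ij}(f,\log f)$. Your Step~2 is carried out correctly and matches the paper line for line.

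The difficulty you flag in Step~1 is genuine, and you have diagnosed it correctly: the death-rate increments of the birth--death generator $K$ (with $K(\ell,\ell-1)=(1-\pp_i)r_i(\ell)$) contribute only $(1-\pp_i)\delta$. What you are missing is that the \emph{same} coupling identity~(\ref{representation}) you use in Step~2 also quantifies the decay of the birth rates. Take $K(\ell,\ell+1):=\pp_i\sum_{j\ne i}\EE[r_j(X_j)\mid X_i=\ell]$; then~(\ref{representation}) lets you write
\[
K(\ell-1,\ell)-K(\ell,\ell+1)\ =\ \pp_i\sum_{j\ne i}\EE\!\left[r_j\!\left(X_j+{\bf 1}_{(J=j)}\right)-r_j(X_j)\,\Big|\,X_i=\ell\right]\ \ge\ \pp_i\delta,
\]
since exactly one term in the sum is nonzero and that term is a rate increment, bounded below by $\delta$ under Assumption~\ref{assume:rates}. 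The one-dimensional criterion of \cite[Theorem~3.1]{MR2548501} (or \cite[Lemma~2.2]{Cap}) takes as input the \emph{sum} of the death-increment and birth-decrement lower bounds, and therefore delivers $\alpha(K)\ge (1-\pp_i)\delta+\pp_i\delta=\delta$. So the ``missing'' factor is recovered not from additional structure of $\pi_i$, but from a second application of the coupling you already have in hand.
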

\begin{proof}Consider the bivariate function $\cH\colon(0,\infty)^2\to[0,\infty)$ defined by
\begin{eqnarray}
\label{def:H}
\cH(u,v) & := &  
(u-v)\left(\log u-\log v\right).
\end{eqnarray}
This function  is convex, because its hessian matrix
\begin{eqnarray}
\left[
\begin{array}{cc}
\partial_{uu}\cH & \partial_{uv}\cH  \\
\partial_{vu}\cH  & \partial_{vv}\cH
\end{array}
\right]
& = & \frac{u+v}{(uv)^2}\left[
\begin{array}{cc}
v^2 & -uv  \\
-uv & u^2
\end{array}
\right]
\end{eqnarray}
is positive semi-definite at each point $(u,v)\in(0,\infty)^2$.
Now, fix a site $i\in[n]$, and let $(X,J)$ be as in Proposition \ref{pr:coupling}. On the event $\{X_i\ge 1\}$, the definition of $f_i$ and Property (\ref{representation}) imply
\begin{eqnarray}
f_i(X_i) & = & \EE[f(X)|X_i],\\
f_i(X_i-1) & = & \EE[f(X+\delta_J-\delta_i)|X_i].
\end{eqnarray}
Thus, the pair $\left(f_i(X_i),f_i(X_i-1)\right)$ is the conditional expectation of $\left(f(X),f(X-\delta_i+\delta_J)\right)$ given $X_i$. By the conditional Jensen inequality, we deduce that on the event $\{X_i\ge 1\}$,
\begin{eqnarray}
\cH\left(f_i(X_i),f_i(X_i-1)\right)  & \le &  \EE\left[\cH\left(f(X),f(X-\delta_i+\delta_J)\right)|X_i \right].
\end{eqnarray}
Multiplying by $(1-\pp_i)r_i(X_i)$ and taking expectations, we obtain
\begin{eqnarray*}
(1-\pp_i)\EE\left[r_i(X_i)\cH\left(f_i(X_i),f_i(X_i-1)\right)\right] & \le & (1-\pp_i)\EE\left[r_i(X_i)\cH\left(f(X),f(X-\delta_i+\delta_J)\right)\right]\\
& \le & \frac{\Delta}{\delta}\sum_{j\in[n]\setminus\{i\}}\cE_{ij}(f,\log f),
\end{eqnarray*}
where the second line uses Property (\ref{product}). Thus, our task boils down to proving
\begin{eqnarray*}
\EE\left[f_i(X_i)\log f_i(X_i)\right]-\EE[f(X)]\log\EE[f(X)] & \le & \frac{
(1-\pp_i)}{\delta}\EE\left[r_i(X_i)\cH\left(f_i(X_i),f_i(X_i-1)\right)\right].
\end{eqnarray*}
Since $\EE[f_i(X_i)]=\EE[f(X)]$, the left-hand side is the entropy of $f_i$ with respect to the law of $X_i$. On the other hand, the right-hand side is exactly $\frac{1}{\delta}\cE_K(f_i,\log f_i)$, where $\cE_K$ denotes the Dirichlet form of the birth-and-death generator $K$ on $\{0,\ldots,m\}$ defined by
\begin{eqnarray}
\forall \ell\in\{1,\ldots,m\},\qquad K(\ell,\ell-1) & := & (1-\pp_i)r_i(\ell)\\
\forall \ell\in\{0,\ldots,m-1\},\qquad K(\ell,\ell+1) & := & \pp_i\sum_{j\in[n]\setminus \{i\}}\EE\left[\left.r_j(X_j)\right|X_i=\ell\right].
\end{eqnarray}
Note that these rates are reversible with respect to  the law of $X_i$, because they mimic the zero-range dynamics at site $i$. In terms of the generator $K$, the above claim reads 
\begin{eqnarray}
\label{BDC}
\alpha(K) & \ge & \delta.
\end{eqnarray} 
It remains to establish this one-dimensional \textsc{MLSI}. By Assumption  \ref{assume:rates}, the death rates satisfy
\begin{eqnarray*}
K(\ell+1,\ell)-K(\ell,\ell-1) & = &  (1-\pp_i)\left(r_i(\ell+1)-r_i(\ell)\right) \ \ge \ (1-p_i)\delta.
\end{eqnarray*}
Regarding the birth rates, we may invoke Property (\ref{representation}) to write
\begin{eqnarray*}
K(\ell-1,\ell)-K(\ell,\ell+1) & = & \pp_i\sum_{j\in[n]\setminus \{i\}}\EE\left[\left.r_j(X_j+{\bf 1}_{(J=j)})-r_j(X_j)\right|X_i=\ell\right] \ \ge \ 
\pp_i\delta.
\end{eqnarray*}
For birth-and-death chains,  these uniform bounds on the rate increments are known to  imply the \textsc{MLSI} (\ref{BDC}), see e.g., \cite[Theorem 3.1]{MR2548501} or \cite[Lemma 2.2]{Cap}. 
\end{proof}

\subsection{The induction argument}
We are finally in position to prove Theorem \ref{th:ZRP} by induction over $n$. The claim is trivial for $n=1$. We now assume that $n\ge 2$ and that the claim is already 
 proven for $(n-1)-$dimensional systems. Fix an  observable $f\colon\Omega\to(0,\infty)$ and a site $i\in [n]$, and consider the $(n-1)-$dimensional $\textsc{ZRP}$ obtained by conditioning on $X_i$, as in Observation \ref{obs:rec}. Viewing $f$ as a function of the $n-1$ remaining variables, the induction hypothesis ensures that
\begin{eqnarray}
\ent(f|X_i) & \le & \frac{2\Delta}{\delta^2}\sum_{j,k\in [n]\setminus\{i\}}\frac{\pp_j}{1-\pp_i}\EE\left[\left.r_k(X_k)\cH\left(f(X),f(X+\ee_j-\ee_k)\right)\right|X_i \right],
\end{eqnarray} 
where the entropy  on the left-hand side is computed conditionally on $X_i$, i.e.
\begin{eqnarray}
\ent(f|X_i) & := & \EE\left[\left.f(X)\log f(X)\right|X_i \right]-f_i(X_i)\log f_i(X_i).
\end{eqnarray}
Taking expectations, we arrive at
\begin{eqnarray}
\EE[f(X)\log f(X)] & \le & \EE[f_i(X_i)\log f_i(X_i)]+\frac{\Delta}{\delta^2(1-\pp_i)}\sum_{j,k\in [n]\setminus\{i\}}\cE_{jk}(f,\log f).
\end{eqnarray}
We now multiply by $1-\pp_i$ and use Proposition \ref{pr:main} to obtain
\begin{eqnarray}
(1-p_i)\ent(f) & \le & \frac{\Delta}{\delta^2}\sum_{j\in[n]\setminus\{i\}}\left((1-p_i)\cE_{ij}(f,\log f)+\sum_{k\in [n]\setminus\{i\}}\cE_{kj}(f,\log f)\right)\\
& \le & \frac{\Delta}{\delta^2}\sum_{j\in[n]\setminus\{i\}}\sum_{k=1}^n\cE_{kj}(f,\log f).
\end{eqnarray}
Summing over all sites $i\in[n]$ and recalling (\ref{dec:E}), we conclude that
\begin{eqnarray}
(n-1)\ent(f) & \le & \frac{2\Delta}{\delta^2}(n-1) \cE(f,\log f).
%\\
%& \le & \sum_{i=1}^n\EE\left[f_i(X_i)\log f_i(X_i)\right]  + \frac{2\Delta}{\delta^2}(n-2)\cE(f,\log f),
\end{eqnarray}
Since this is true for every $f\colon\Omega\to(0,\infty)$, we have just established a \textsc{MLSI} with constant $\frac{\delta^2}{2\Delta}$ for our $n-$dimensional \textsc{ZRP}. This completes our induction step.

\bibliographystyle{plain}
\bibliography{ZRP}

\end{document}